\newtheorem{theorem}{Theorem}[section]
\numberwithin{equation}{section}
\newcommand{\res}[1]
{\textrm{Res}_{#1}}
\newcommand{\cycle}[2]
{\genfrac{[}{]}{0pt}{}{#1}{#2}}
\title{A Generalization of the Chu-Vandermonde Convolution and some Harmonic Number Identities}
\author{M.J. Kronenburg}
\date{}
\begin{document}

\maketitle

\begin{abstract}
A generalization of the Chu-Vandermonde convolution is presented and proved
with the integral representation method.
This identity can be transformed into another identity, which has as special cases
two known identities.
Another identity that is closely related to this identity is presented and proved.
Using the modified geometric series from another paper, some closely related
identities are listed.
Some corresponding harmonic number identities are derived,
which have as special cases some known harmonic number identities.
For one combinatorial sum a recursion formula is derived and used
to compute a few examples.

\end{abstract}

\noindent
\textbf{Keywords}: binomial coefficient, combinatorial identities,
harmonic number.\\
\textbf{MSC 2010}: 05A10, 05A19

\section{A Generalization of the Chu-Vandermonde\\ Convolution}

The following theorem is a generalization of the Chu-Vandermonde convolution.
\begin{theorem}
\begin{equation}\label{chugen}
 \sum_{k=0}^n \binom{a}{k}\binom{b}{n-k}\binom{k}{c}\binom{n-k}{d} 
  = \binom{a+b-c-d}{n-c-d}\binom{a}{c}\binom{b}{d}
\end{equation}
\end{theorem}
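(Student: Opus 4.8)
The plan is to prove \eqref{chugen} by the coefficient-extraction (integral representation) method. Recall that for a nonnegative integer $j$ one has $\binom{m}{j}=\res{z}\left[(1+z)^m z^{-j-1}\right]$, the coefficient of $z^j$ in the formal binomial series $(1+z)^m$, which makes sense for arbitrary $m$. I would use one formal variable $x$ to encode $\binom{k}{c}$, a second variable $y$ to encode $\binom{n-k}{d}$, and a third variable $t$ to resolve the remaining Chu-Vandermonde-type sum over $\binom{a}{k}\binom{b}{n-k}$.

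Writing $\binom{k}{c}=\res{x}\left[(1+x)^k x^{-c-1}\right]$ and $\binom{n-k}{d}=\res{y}\left[(1+y)^{n-k} y^{-d-1}\right]$ and pulling these residues outside the finite sum, the left-hand side of \eqref{chugen} becomes
\[
 \res{x}\res{y}\left[\frac{1}{x^{c+1}y^{d+1}}\sum_{k=0}^n \binom{a}{k}(1+x)^k\binom{b}{n-k}(1+y)^{n-k}\right].
\]
The inner sum is the $t^n$-coefficient of the product $\bigl(1+(1+x)t\bigr)^a\bigl(1+(1+y)t\bigr)^b$ of two binomial series, hence equals $\res{t}\left[\bigl(1+(1+x)t\bigr)^a\bigl(1+(1+y)t\bigr)^b\, t^{-n-1}\right]$, so the whole left-hand side is a single triple residue.

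The main computational step, and the place a small trick is needed, is evaluating the $x$- and $y$-residues. Writing $1+(1+x)t=(1+t)\bigl(1+\tfrac{xt}{1+t}\bigr)$ and expanding, the coefficient of $x^c$ in $\bigl(1+(1+x)t\bigr)^a$ is $\binom{a}{c}(1+t)^{a-c}t^c$, and symmetrically the coefficient of $y^d$ in $\bigl(1+(1+y)t\bigr)^b$ is $\binom{b}{d}(1+t)^{b-d}t^d$. Substituting these back, the triple residue collapses to
\[
 \binom{a}{c}\binom{b}{d}\,\res{t}\left[\frac{(1+t)^{a+b-c-d}}{t^{n-c-d+1}}\right]=\binom{a}{c}\binom{b}{d}\binom{a+b-c-d}{n-c-d},
\]
which is exactly the right-hand side of \eqref{chugen}.

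I expect the only genuine care needed is formal bookkeeping: reading each factor $(1+\cdot)^m$ as its binomial series so that $a$ and $b$ need not be nonnegative integers, noting that the finite sum may be freely exchanged with the residue operators, and checking the degenerate range $n<c+d$ (where both sides vanish). As an independent sanity check one can also avoid integrals altogether: the absorption identities $\binom{a}{k}\binom{k}{c}=\binom{a}{c}\binom{a-c}{k-c}$ and $\binom{b}{n-k}\binom{n-k}{d}=\binom{b}{d}\binom{b-d}{n-k-d}$ let one factor out $\binom{a}{c}\binom{b}{d}$, and then the substitution $k\mapsto k-c$ followed by the ordinary Chu-Vandermonde convolution applied to $\sum_j\binom{a-c}{j}\binom{b-d}{n-c-d-j}$ reproduces the same closed form.
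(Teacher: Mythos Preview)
Your argument is correct, but it is organized differently from the paper's own proof. The paper begins by applying the trinomial revision (absorption) identities $\binom{a}{k}\binom{k}{c}=\binom{a}{c}\binom{a-c}{k-c}$ and $\binom{b}{n-k}\binom{n-k}{d}=\binom{b}{d}\binom{b-d}{n-d-k}$ \emph{before} any integral representation, so that $\binom{a}{c}\binom{b}{d}$ factors out and only the shifted Vandermonde sum $\sum_k\binom{a-c}{k-c}\binom{b-d}{n-d-k}$ remains; this is then handled by a two-variable residue computation in which the $k$-sum becomes a geometric series $\sum_k(y/x)^k$. You instead keep all four binomials in play and work with a three-variable residue; your factorization $1+(1+x)t=(1+t)\bigl(1+\tfrac{xt}{1+t}\bigr)$ is exactly what performs the trinomial revision inside the generating function, so the two proofs are doing the same algebra in different orders. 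Your version is a single self-contained residue calculation, while the paper's version isolates the combinatorial simplification first and then applies a smaller residue step. Your closing ``sanity check'' is in fact the cleanest route of all: once absorption reduces the sum to $\sum_j\binom{a-c}{j}\binom{b-d}{n-c-d-j}$, one can simply quote the classical Chu--Vandermonde identity rather than rederive it via residues.
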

\begin{proof}
Applying the trinomial revision identity \cite{GKP94,K97,K15} to the summand twice,
the identity simplifies to:
\begin{equation}
 \sum_{k=0}^n \binom{a-c}{k-c}\binom{b-d}{n-d-k} = \binom{a+b-c-d}{n-c-d}
\end{equation}
Using the integral representation method for combinatorial sums \cite{CB84,E84},
this becomes:\\
\begin{equation}
\begin{split}
 & \sum_{k=0}^{\infty} \res{x}\frac{(1+x)^{a-c}}{x^{k-c+1}}\res{y}\frac{(1+y)^{b-d}}{y^{n-d-k+1}} \\
 ={} & \res{x}\res{y} \frac{(1+x)^{a-c}(1+y)^{b-d}}{x^{-c+1}y^{n-d+1}} \sum_{k=0}^{\infty}\left(\frac{y}{x}\right)^k \\
 ={} & \res{x}\res{y} \frac{(1+x)^{a-c}(1+y)^{b-d}x^{c-1}}{y^{n-d+1}(1-y/x)} \\
 ={} & \res{y}\res{x} \frac{(1+x)^{a-c}(1+y)^{b-d}x^c}{y^{n-d+1}(x-y)} \\
 ={} & \res{y} \frac{(1+y)^{a+b-c-d}}{y^{n-c-d+1}} \\
 ={} & \binom{a+b-c-d}{n-c-d}
\end{split}
\end{equation}
\end{proof}
For this identity, the lower limit of the summation may be replaced by $\max(n-b,c)$ and the upper limit
by $\min(a,n-d)$.
The special case $c=d=0$ reduces to the Chu-Vandermonde convolution:
\begin{equation}\label{chuold}
 \sum_{k=0}^n \binom{a}{k}\binom{b}{n-k} = \binom{a+b}{n}
\end{equation}
and the special case $d=0$ was already known in literature \cite{G72,Q16}.
The special case $a=b=n$ reduces to:
\begin{equation}\label{sqrsum1}
 \sum_{k=0}^n \binom{n}{k}^2 \binom{k}{c}\binom{n-k}{d} = \binom{2n-c-d}{n}\binom{n}{c}\binom{n}{d}
\end{equation}
The following identity replaces a binomial coefficient by its symmetry equivalent \cite{GKP94,K97,K15}:
\begin{equation}\label{binomsym}
 \binom{n}{k} = (-1)^k\binom{-n+k-1}{k}
\end{equation}
Replacing $a$ by $-p-1$ and $b$ by $-q-1$, and using (\ref{binomsym}),
the identity transforms to:
\begin{equation}\label{chugen2}
 \sum_{k=0}^n \binom{p+k}{p}\binom{q+n-k}{q}\binom{k}{c}\binom{n-k}{d} 
  = \binom{n+p+q+1}{n-c-d}\binom{p+c}{c}\binom{q+d}{d}
\end{equation}
The special case $c=d=0$ reduces to \cite{G56,G72,WGW13}:
\begin{equation}
 \sum_{k=0}^n \binom{p+k}{p}\binom{q+n-k}{q} = \binom{n+p+q+1}{n}
\end{equation}
and the special case $p=q=0$ reduces to \cite{G72,GKP94,K97,S90}:
\begin{equation}
 \sum_{k=0}^n \binom{k}{c}\binom{n-k}{d} = \binom{n+1}{c+d+1}
\end{equation}
Another theorem that is closely related to theorem 1.1 is the following.
\begin{theorem}
\begin{equation}\label{chu2gen}
 \sum_{k=0}^a \binom{a}{k}\binom{b}{m+k}\binom{k}{c}\binom{m+k}{d} 
  = \binom{a+b-c-d}{m+a-d}\binom{a}{c}\binom{b}{d}
\end{equation}
\end{theorem}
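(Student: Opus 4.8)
The plan is to mimic the proof of Theorem 1.1: first use the trinomial revision identity to collapse the four binomial coefficients in the summand down to two, and then evaluate the resulting elementary convolution by the integral representation method.

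I would begin by applying trinomial revision to the two products in the summand, $\binom{a}{k}\binom{k}{c}=\binom{a}{c}\binom{a-c}{k-c}$ and $\binom{b}{m+k}\binom{m+k}{d}=\binom{b}{d}\binom{b-d}{m+k-d}$. Since $\binom{a}{c}$ and $\binom{b}{d}$ are free of $k$, pulling them out of the sum reduces (\ref{chu2gen}) to
\begin{equation*}
 \sum_{k=0}^{a}\binom{a-c}{k-c}\binom{b-d}{m+k-d}=\binom{a+b-c-d}{m+a-d}.
\end{equation*}
If $a<c$ or $b<d$, both sides of (\ref{chu2gen}) vanish, so I may assume $a\ge c$ and $b\ge d$.

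The one point where this differs from Theorem 1.1 is that here the index $k$ occurs with the \emph{same} sign in both remaining binomial coefficients, so writing each as a residue and summing the geometric series directly merely reproduces the sum. The remedy is to apply the symmetry (\ref{binomsym}) first, in the form $\binom{a-c}{k-c}=\binom{a-c}{a-k}$, which is legitimate since $a\ge c$; after this substitution $k$ enters the two factors with opposite signs, just as in Theorem 1.1. I would then put $\binom{a-c}{a-k}=\res{x}\frac{(1+x)^{a-c}}{x^{a-k+1}}$ and $\binom{b-d}{m+k-d}=\res{y}\frac{(1+y)^{b-d}}{y^{m+k-d+1}}$, extend the summation to all $k\ge 0$ (the extra terms vanish), interchange the sum with the residue operators, and evaluate $\sum_{k\ge 0}(x/y)^{k}=1/(1-x/y)$. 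This leaves the double residue of $\frac{(1+x)^{a-c}(1+y)^{b-d}}{x^{a+1}y^{m-d}(y-x)}$; the $x$-residue equals $y^{-a-1}(1+y)^{a-c}$, and the remaining residue is $\res{y}\frac{(1+y)^{a+b-c-d}}{y^{m+a-d+1}}=\binom{a+b-c-d}{m+a-d}$, which is the claim.

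Equivalently, and perhaps more transparently, after the symmetry substitution one can set $l=a-k$ and recognise $\sum_{l}\binom{a-c}{l}\binom{b-d}{(m+a-d)-l}$, which is the Chu-Vandermonde convolution (\ref{chuold}) at upper index $m+a-d$. In either presentation the main obstacle is conceptual rather than computational: one must notice that $k$ appearing with the same sign in both factors blocks a direct repetition of the Theorem 1.1 argument and calls for an auxiliary use of binomial symmetry, after which the evaluation is routine.
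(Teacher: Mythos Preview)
Your reduction via trinomial revision matches the paper, and both of your completions --- the residue calculation after applying symmetry, and the substitution $l=a-k$ followed by~(\ref{chuold}) --- are correct. Where you diverge from the paper is in the claim that the direct residue approach ``merely reproduces the sum'': the paper carries out exactly that direct approach. Writing $\binom{a-c}{k-c}=\res{x}(1+x)^{a-c}/x^{k-c+1}$ and $\binom{b-d}{m+k-d}=\res{y}(1+y)^{b-d}/y^{m+k-d+1}$, the geometric series is $\sum_{k\ge0}(xy)^{-k}$; the inner $x$-residue is then taken at the new simple pole $x=1/y$, yielding $(1+1/y)^{a-c}y^{-c}$ and hence $\res{y}(1+y)^{a+b-c-d}/y^{m+a-d+1}$. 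The only change from Theorem~1.1 is that the geometric ratio is $1/(xy)$ instead of $y/x$ and the pole moves from $x=y$ to $x=1/y$, so no preliminary symmetry step is needed. That said, your second presentation is more elementary than either residue computation: once trinomial revision and the symmetry $\binom{a-c}{k-c}=\binom{a-c}{a-k}$ reduce the claim to $\sum_{l}\binom{a-c}{l}\binom{b-d}{(m+a-d)-l}$, the result is literally the classical convolution~(\ref{chuold}), and Egorychev's machinery is unnecessary. The paper's route keeps a uniform template across Theorems~1.1 and~1.2; yours makes Theorem~1.2 an immediate corollary of~(\ref{chuold}).
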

\begin{proof}
Applying the trinomial revision identity \cite{GKP94,K97,K15} to the summand twice,
the identity simplifies to:
\begin{equation}
 \sum_{k=0}^a \binom{a-c}{k-c}\binom{b-d}{k+m-d} = \binom{a+b-c-d}{m+a-d}
\end{equation}
Using the integral representation method for combinatorial sums \cite{CB84,E84},
this becomes:
\begin{equation}
\begin{split}
 & \sum_{k=0}^{\infty} \res{x}\frac{(1+x)^{a-c}}{x^{k-c+1}}\res{y}\frac{(1+y)^{b-d}}{y^{k+m-d+1}} \\
 ={} & \res{x}\res{y} \frac{(1+x)^{a-c}(1+y)^{b-d}}{x^{-c+1}y^{m-d+1}} \sum_{k=0}^{\infty}\left(\frac{1}{x y}\right)^k \\
 ={} & \res{x}\res{y} \frac{(1+x)^{a-c}(1+y)^{b-d}x^{c-1}}{y^{m-d+1}(1-1/(x y))} \\
 ={} & \res{y}\res{x} \frac{(1+x)^{a-c}(1+y)^{b-d}x^c}{y^{m-d+1}(x-1/y)} \\
 ={} & \res{y} \frac{(1+1/y)^{a-c}(1+y)^{b-d}}{y^{m+c-d+1}} \\
 ={} & \res{y} \frac{(1+y)^{a+b-c-d}}{y^{m+a-d+1}} \\
 ={} & \binom{a+b-c-d}{m+a-d}
\end{split}
\end{equation}
\end{proof}
For this identity, the lower limit of the summation may be replaced by\linebreak $\max(c,d-m)$
and the upper limit by $\min(a,b-m)$.
The special case $c=d=0$ reduces to \cite{G72,Q16}:
\begin{equation}
 \sum_{k=0}^a\binom{a}{k}\binom{b}{m+k} = \binom{a+b}{m+a}
\end{equation}
The special case $m=0$ and $a=b=n$ reduces to:
\begin{equation}\label{sqrsum2}
 \sum_{k=0}^n \binom{n}{k}^2 \binom{k}{c}\binom{k}{d} = \binom{2n-c-d}{n-d}\binom{n}{c}\binom{n}{d}
\end{equation}
The special cases $c=d=0$, $c=1$, $d=0$ and $c=d=1$ of this formula are well known \cite{G72}.\\
Using the residue formula (2.2) and the modified geometric series (3.13) and (3.14) in \cite{K19} 
and the absorption identity (3.3) in \cite{K15}:
\begin{equation}
 \sum_{k=0}^n \binom{a}{k}\binom{b}{n-k}\binom{k}{c}\binom{n-k}{d} k
  =  \binom{a+b-c-d}{n-c-d}\binom{a}{c}\binom{b}{d} \frac{n(a-c)+bc-ad}{a+b-c-d}
\end{equation}
\begin{equation}
\begin{split}
 & \sum_{k=0}^n \binom{a}{k}\binom{b}{n-k}\binom{k}{c}\binom{n-k}{d} k^2
   = \binom{a+b-c-d}{n-c-d}\binom{a}{c}\binom{b}{d} \\
 & \cdot \frac{n(a-c)[n(a-c-1)+2(bc-ad)+b+d]+(bc-ad)^2-bd(a-c)-ac(b-d)}{(a+b-c-d)(a+b-c-d-1)} \\
\end{split}
\end{equation}
\begin{equation}
\begin{split}
 f_1(n,a,b,c,d) & = (a+b-c-d-1)(a+b-c-d-2) \\
 & \quad\cdot [c^3(a+b-c-d)+(a-c)(3c^2+3c+1)(n-c-d)] \\
 & + (a-c)(a-c-1)(n-c-d)(n-c-d-1) \\
 & \quad\cdot [3(a+b-c-d-2)(c+1)+(a-c-2)(n-c-d-2)] \\
\end{split}
\end{equation}
\begin{equation}
\begin{split}
 \sum_{k=0}^n \binom{a}{k}\binom{b}{n-k} & \binom{k}{c}\binom{n-k}{d} k^3
  = \binom{a+b-c-d}{n-c-d}\binom{a}{c}\binom{b}{d} \\
 & \cdot \frac{f_1(n,a,b,c,d)}{(a+b-c-d)(a+b-c-d-1)(a+b-c-d-2)} \\
\end{split}
\end{equation}
\begin{equation}
 \sum_{k=0}^a \binom{a}{k}\binom{b}{m+k}\binom{k}{c}\binom{m+k}{d} k
  =  \binom{a+b-c-d}{m+a-d}\binom{a}{c}\binom{b}{d} \frac{m(c-a)+ab-cd}{a+b-c-d}
\end{equation}
\begin{equation}
\begin{split}
 & \sum_{k=0}^a \binom{a}{k}\binom{b}{m+k}\binom{k}{c}\binom{m+k}{d} k^2 
 = \binom{a+b-c-d}{m+a-d}\binom{a}{c}\binom{b}{d} \\
 & \cdot \frac{m(a-c)[m(a-c-1)+2(cd-ab)+b+d]+(cd-ab)^2-bd(a-c)-ac(b-d)}{(a+b-c-d)(a+b-c-d-1)} \\
\end{split}
\end{equation}
\begin{equation}
\begin{split}
 f_2(m,a,b,c,d) & = (a+b-c-d-1)(a+b-c-d-2) \\
 & \quad\cdot [c^3(a+b-c-d)+(a-c)(3c^2+3c+1)(b-m-c)] \\
 & + (a-c)(a-c-1)(b-m-c)(b-m-c-1) \\
 & \quad\cdot [3(a+b-c-d-2)(c+1)+(a-c-2)(b-m-c-2)] \\
\end{split}
\end{equation}
\begin{equation}
\begin{split}
 \sum_{k=0}^a \binom{a}{k}\binom{b}{m+k} & \binom{k}{c}\binom{m+k}{d} k^3
 = \binom{a+b-c-d}{m+a-d}\binom{a}{c}\binom{b}{d} \\
 & \cdot \frac{f_2(m,a,b,c,d)}{(a+b-c-d)(a+b-c-d-1)(a+b-c-d-2)} \\
\end{split}
\end{equation}
\begin{equation}
\begin{split}
 \sum_{k=0}^n \binom{p+k}{p}\binom{q+n-k}{q} & \binom{k}{c}\binom{n-k}{d} k 
 =  \binom{n+p+q+1}{n-c-d}\binom{p+c}{c}\binom{q+d}{d} \\
 & \cdot \frac{n(p+c+1)+(q+1)c-(p+1)d}{p+q+c+d+2} \\
\end{split}
\end{equation}
\begin{equation}
\begin{split}
 f_3(n,p,q,c,d) = & n(p+c+1)\{n(p+c+2)+2[(q+1)c-(p+1)d]+q-d+1\} \\
 & + [(q+1)c-(p+1)d]^2-(q+1)c(p+d+1)-(p+1)d(q+c+1) \\
\end{split}
\end{equation}
\begin{equation}
\begin{split}
 \sum_{k=0}^n \binom{p+k}{p}\binom{q+n-k}{q} & \binom{k}{c}\binom{n-k}{d} k^2 
  = \binom{n+p+q+1}{n-c-d}\binom{p+c}{c}\binom{q+d}{d} \\
 & \cdot \frac{f_3(n,p,q,c,d)}{(p+q+c+d+2)(p+q+c+d+3)}  \\
\end{split}
\end{equation}
\begin{equation}
\begin{split}
 f_4(n,p,q,c,d) & = (p+q+c+d+3)(p+q+c+d+4) \\
 & \quad\cdot [c^3(p+q+c+d+2)+(p+c+1)(3c^2+3c+1)(n-c-d)] \\
 & + (p+c+1)(p+c+2)(n-c-d)(n-c-d-1) \\
 & \quad\cdot [3(p+q+c+d+4)(c+1)+(p+c+3)(n-c-d-2)] \\
\end{split}
\end{equation}
\begin{equation}
\begin{split}
 \sum_{k=0}^n \binom{p+k}{p} & \binom{q+n-k}{q}\binom{k}{c}\binom{n-k}{d} k^3 
  = \binom{n+p+q+1}{n-c-d}\binom{p+c}{c}\binom{q+d}{d} \\
 & \cdot \frac{f_4(n,p,q,c,d)}{(p+q+c+d+2)(p+q+c+d+3)(p+q+c+d+4)}  \\
\end{split}
\end{equation}

\section{Harmonic Number Identities}

The definition of the generalized harmonic numbers with
nonnegative integer $n$, complex order $m$ and complex offset $c$, is \cite{K11,L07}:
\begin{equation}\label{genharmdef}
 H_{c,n}^{(m)} = \sum_{k=1}^{n}\frac{1}{(c+k)^m}
\end{equation}
from which follows that $H_{c,0}^{(m)}=0$, and for notation $H_n^{(m)}=H_{0,n}^{(m)}$.
From this definition follows for nonnegative integer $c$:
\begin{equation}\label{genharmidef}
 H_{c,n}^{(m)} = H_{c+n}^{(m)} - H_c^{(m)}
\end{equation}
The classical harmonic numbers are:
\begin{equation}
 H_n = H_{0,n}^{(1)}
\end{equation}
Using $d/dx\Gamma(x)=\Gamma(x)\psi(x)$ where $\psi(x)$ is the digamma function,
and using\linebreak $\psi(x+n+1)-\psi(x+1)=H^{(1)}_{x,n}$ \cite{AAR99},
these harmonic numbers are linked to binomial coefficients:
\begin{equation}\label{binomdif}
 \frac{d}{dx}\binom{x+y}{n} = \binom{x+y}{n}H_{x+y-n,n}^{(1)} = \binom{x+y}{n}(H_{x+y}-H_{x+y-n})
\end{equation}
\begin{equation}\label{binomdif2}
 \frac{d}{dx}\binom{n}{x+y} = \binom{n}{x+y}H_{x+y,n-2(x+y)}^{(1)} = \binom{n}{x+y}(H_{n-(x+y)}-H_{x+y})
\end{equation}
For the generalized harmonic numbers (\ref{genharmdef}):
\begin{equation}\label{harmdif}
 \frac{d}{dx}H_{x+y,n}^{(m)} = -m H_{x+y,n}^{(m+1)}
\end{equation}
When differentiating finite summation terms, care must be taken that the differentiated symbol
is not present in the summation limits.
Because the argument of a classical harmonic number cannot be negative,
these harmonic numbers impose constraints on the parameters.
When there are additional constraints on the parameters they are mentioned.\\
Differentiating (\ref{chugen}) to $a$, the following identity for $a\geq n$ results:
\begin{equation}
\begin{split}
 & \sum_{k=0}^n \binom{a}{k}\binom{b}{n-k}\binom{k}{c}\binom{n-k}{d}H_{a-k} \\
={} & \binom{a+b-c-d}{n-c-d}\binom{a}{c}\binom{b}{d}
  (H_{a+b-n}-H_{a+b-c-d}+H_{a-c})
\end{split}
\end{equation}
Differentiating to $b$, the following identity for $b\geq n$ results:
\begin{equation}
\begin{split}
 & \sum_{k=0}^n \binom{a}{k}\binom{b}{n-k}\binom{k}{c}\binom{n-k}{d}H_{b-n+k} \\
={} & \binom{a+b-c-d}{n-c-d}\binom{a}{c}\binom{b}{d}
  (H_{a+b-n}-H_{a+b-c-d}+H_{b-d})
\end{split}
\end{equation}
Replacing $k$ by $n-k$ and interchanging $a$ with $b$ and $c$ with $d$,
these two identities are equivalent.
Differentiating to $a$ and $b$, the following identity for $a\geq n$ and $b\geq n$ results:
\begin{equation}
\begin{split}
 & \sum_{k=0}^n \binom{a}{k}\binom{b}{n-k}\binom{k}{c}\binom{n-k}{d}H_{a-k}H_{b-n+k} \\
 ={} & \binom{a+b-c-d}{n-c-d}\binom{a}{c}\binom{b}{d} [H^{(2)}_{a+b-n}-H^{(2)}_{a+b-c-d} \\
 & + (H_{a+b-n}-H_{a+b-c-d}+H_{a-c})(H_{a+b-n}-H_{a+b-c-d}+H_{b-d})]
\end{split}
\end{equation}
The special case $a=b=n$ and $c=d=0$ reduces to the known identities \cite{CC09,CD05,G72,L07,WGW13}:
\begin{equation}
 \sum_{k=0}^n \binom{n}{k}^2 H_k = \binom{2n}{n}(2H_n-H_{2n})
\end{equation}
\begin{equation}
 \sum_{k=0}^n \binom{n}{k}^2 H_kH_{n-k} = \binom{2n}{n}[H^{(2)}_n-H^{(2)}_{2n}+(2H_n-H_{2n})^2]
\end{equation}
Differentiating (\ref{chugen}) to $c$, the following identity for $b\leq n$ results:
\begin{equation}
  \sum_{k=0}^n \binom{a}{k}\binom{b}{n-k}\binom{n-k}{d}H_k
= \binom{a+b-d}{n-d}\binom{b}{d} (H_{n-d}-H_{a+b-d}+H_a)
\end{equation}
Differentiating to $d$, the following identity for $a\leq n$ results:
\begin{equation}
 \sum_{k=0}^n \binom{a}{k}\binom{b}{n-k}\binom{k}{c}H_{n-k}
 = \binom{a+b-c}{n-c}\binom{a}{c} (H_{n-c}-H_{a+b-c}+H_b)
\end{equation}
Replacing $k$ by $n-k$ and interchanging $a$ with $b$ and $c$ with $d$,
these two identities are equivalent.
Differentiating to $c$ and $d$, the following identity for $a\leq n$ and $b\leq n$ results:
\begin{equation}
\begin{split}
 & \sum_{k=0}^n \binom{a}{k}\binom{b}{n-k}H_{k}H_{n-k} \\
 ={} & \binom{a+b}{n} [H^{(2)}_n-H^{(2)}_{a+b}
  + (H_n-H_{a+b}+H_a)(H_n-H_{a+b}+H_b)]
\end{split}
\end{equation}
In (\ref{chugen}) replacing $k$ by $c+k$ and $n$ by $n+c$, and
differentiating to $c$, the following identity for $a-c\geq n$ results:
\begin{equation}
\begin{split}
  & \sum_{k=0}^n \binom{a}{c+k}\binom{b}{n-k}\binom{c+k}{c}\binom{n-k}{d}H_{a-c-k} \\
={} & \binom{a+b-c-d}{n-d}\binom{a}{c}\binom{b}{d} (H_{a+b-n-c}-H_{a+b-c-d}+H_{a-c})
\end{split}
\end{equation}
Replacing $n$ with $n+d$ and differentiating to $d$, the following identity for $b-d\geq n$ results:
\begin{equation}
\begin{split}
 & \sum_{k=0}^n \binom{a}{k}\binom{b}{n+d-k}\binom{k}{c}\binom{n+d-k}{d}H_{b-d-n+k} \\
 ={} & \binom{a+b-c-d}{n-c}\binom{a}{c}\binom{b}{d} (H_{a+b-n-d}-H_{a+b-c-d}+H_{b-d})
\end{split}
\end{equation}
Replacing $k$ by $n-k$ and interchanging $a$ with $b$ and $c$ with $d$,
these two identities are equivalent.
Differentiating to $c$ and $d$, the following identity for $a-c\geq n$ and $b-d\geq n$ results:
\begin{equation}
\begin{split}
 & \sum_{k=0}^n \binom{a}{c+k}\binom{b}{n+d-k}\binom{c+k}{c}\binom{n+d-k}{d}H_{a-c-k}H_{b-d-n+k} \\
 ={} & \binom{a+b-c-d}{n}\binom{a}{c}\binom{b}{d} [H^{(2)}_{a+b-n-c-d}-H^{(2)}_{a+b-c-d} \\
 & + (H_{a+b-n-c-d}-H_{a+b-c-d}+H_{a-c})(H_{a+b-n-c-d}-H_{a+b-c-d}+H_{b-d})]
\end{split}
\end{equation}
Differentiating (\ref{chugen2}) to $p$, the following identity results:
\begin{equation}
\begin{split}
 & \sum_{k=0}^n \binom{p+k}{p}\binom{q+n-k}{q}\binom{k}{c}\binom{n-k}{d}H_{p+k} \\
={} & \binom{n+p+q+1}{n-c-d}\binom{p+c}{c}\binom{q+d}{d}
  (H_{n+p+q+1}-H_{p+q+c+d+1}+H_{p+c})
\end{split}
\end{equation}
The special case $p=q=0$ is found in \cite{S90},
and the special case $c=d=0$ is found in \cite{WGW13}.
Differentiating to $q$, the following identity results:
\begin{equation}
\begin{split}
 & \sum_{k=0}^n \binom{p+k}{p}\binom{q+n-k}{q}\binom{k}{c}\binom{n-k}{d}H_{q+n-k} \\
={} & \binom{n+p+q+1}{n-c-d}\binom{p+c}{c}\binom{q+d}{d}
  (H_{n+p+q+1}-H_{p+q+c+d+1}+H_{q+d})
\end{split}
\end{equation}
Replacing $k$ by $n-k$ and interchanging $p$ with $q$ and $c$ with $d$,
these two identities are equivalent.
Differentiating to $p$ and $q$ the following identity results:
\begin{equation}
\begin{split}
 & \sum_{k=0}^n \binom{p+k}{p}\binom{q+n-k}{q}\binom{k}{c}\binom{n-k}{d}H_{p+k}H_{q+n-k} \\
={} & \binom{n+p+q+1}{n-c-d}\binom{p+c}{c}\binom{q+d}{d}
   [H_{p+q+c+d+1}^{(2)}-H_{n+p+q+1}^{(2)} \\
  & +(H_{n+p+q+1}-H_{p+q+c+d+1}+H_{p+c})(H_{n+p+q+1}-H_{p+q+c+d+1}+H_{q+d})]
\end{split}
\end{equation}
The special case $p=q=0$ is found in \cite{S90},
and the special case $c=d=0$ is found in \cite{WGW13}.
In (\ref{chugen2}) replacing $k$ by $c+k$ and $n$ by $n+c$, and
differentiating to $c$, the following identity results:
\begin{equation}
\begin{split}
 & \sum_{k=0}^n \binom{p+c+k}{p}\binom{q+n-k}{q}\binom{c+k}{c}\binom{n-k}{d}H_{p+c+k} \\
={} & \binom{n+p+q+c+1}{n-d}\binom{p+c}{c}\binom{q+d}{d}
  (H_{n+p+q+c+1}-H_{p+q+c+d+1}+H_{p+c})
\end{split}
\end{equation}
Replacing $n$ with $n+d$ and differentiating to $d$, the following identity results:
\begin{equation}
\begin{split}
 & \sum_{k=0}^n \binom{p+k}{p}\binom{q+n+d-k}{q}\binom{k}{c}\binom{n+d-k}{d}H_{q+n+d-k} \\
={} & \binom{n+p+q+d+1}{n-c}\binom{p+c}{c}\binom{q+d}{d}
  (H_{n+p+q+d+1}-H_{p+q+c+d+1}+H_{q+d})
\end{split}
\end{equation}
Replacing $k$ by $n-k$ and interchanging $p$ with $q$ and $c$ with $d$,
these two identities are equivalent.
Differentiating to $c$ and $d$ the following identity results:
\begin{equation}
\begin{split}
 & \sum_{k=0}^n \binom{p+c+k}{p}\binom{q+n+d-k}{q}\binom{c+k}{c}\binom{n+d-k}{d}H_{p+c+k}H_{q+n+d-k} \\
& ={} \binom{n+p+q+c+d+1}{n}\binom{p+c}{c}\binom{q+d}{d}
   [H_{p+q+c+d+1}^{(2)}-H_{n+p+q+c+d+1}^{(2)} \\
  & +(H_{n+p+q+c+d+1}-H_{p+q+c+d+1}+H_{p+c})(H_{n+p+q+c+d+1}-H_{p+q+c+d+1}+H_{q+d})]
\end{split}
\end{equation}
Differentiating (\ref{chu2gen}) to $a$, the following identity for $a\geq b-m$ results:
\begin{equation}
\begin{split}
 & \sum_{k=0}^{b-m} \binom{a}{k}\binom{b}{m+k}\binom{k}{c}\binom{m+k}{d}H_{a-k} \\
={} & \binom{a+b-c-d}{m+a-d}\binom{a}{c}\binom{b}{d}(H_{m+a-d}-H_{a+b-c-d}+H_{a-c})
\end{split}
\end{equation}
Differentiating to $b$, the following identity for $b\geq a+m$ results:
\begin{equation}
\begin{split}
 & \sum_{k=0}^a \binom{a}{k}\binom{b}{m+k}\binom{k}{c}\binom{m+k}{d}H_{b-m-k} \\
={} & \binom{a+b-c-d}{m+a-d}\binom{a}{c}\binom{b}{d}(H_{b-m-c}-H_{a+b-c-d}+H_{b-d})
\end{split}
\end{equation}
Differentiating to $c$, the following identity for $d\geq m$ results:
\begin{equation}
\begin{split}
 & \sum_{k=0}^a \binom{a}{k}\binom{b}{m+k}\binom{m+k}{d}H_k \\
={} & \binom{a+b-d}{m+a-d}\binom{b}{d}(H_a-H_{a+b-d}+H_{b-m})
\end{split}
\end{equation}
Differentiating to $d$, the following identity with $m=d$ results:
\begin{equation}
\begin{split}
 & \sum_{k=0}^a \binom{a}{k}\binom{b}{d+k}\binom{k}{c}\binom{d+k}{d}H_k \\
={} & \binom{a+b-c-d}{a}\binom{a}{c}\binom{b}{d}(H_a-H_{a+b-c-d}+H_{b-d})
\end{split}
\end{equation}
Differentiating to $a$ and $c$, the following identity for $a\geq b-m$ and $d\geq m$ results:
\begin{equation}
\begin{split}
 & \sum_{k=0}^{b-m} \binom{a}{k}\binom{b}{m+k}\binom{m+k}{d}H_k H_{a-k} \\
={} & \binom{a+b-d}{m+a-d}\binom{b}{d}[H_a^{(2)}-H_{a+b-d}^{(2)} \\
 & + (H_a-H_{a+b-d}+H_{m+a-d})(H_a-H_{a+b-d}+H_{b-m})]
\end{split}
\end{equation}
Differentiating to $a$ and $d$, the following identity for $a\geq b-d$ and $m=d$ results:
\begin{equation}
\begin{split}
 & \sum_{k=0}^{b-d} \binom{a}{k}\binom{b}{d+k}\binom{k}{c}\binom{d+k}{d}H_k H_{a-k} \\
={} & \binom{a+b-c-d}{a}\binom{a}{c}\binom{b}{d}[H_a^{(2)}-H_{a+b-c-d}^{(2)} \\
 & + (H_a-H_{a+b-c-d}+H_{a-c})(H_a-H_{a+b-c-d}+H_{b-d})]
\end{split}
\end{equation}
Differentiating to $b$ and $c$, the following identity for $b\geq a+m$ and $d\geq m$ results:
\begin{equation}
\begin{split}
 & \sum_{k=0}^a \binom{a}{k}\binom{b}{m+k}\binom{m+k}{d}H_k H_{b-m-k} \\
={} & \binom{a+b-d}{m+a-d}\binom{b}{d}[H_{b-m}^{(2)}-H_{a+b-d}^{(2)} \\
 & + (H_{b-m}-H_{a+b-d}+H_{b-d})(H_{b-m}-H_{a+b-d}+H_a)]
\end{split}
\end{equation}
Differentiating to $b$ and $d$, the following identity for $b\geq a+d$ and $m=d$ results:
\begin{equation}
\begin{split}
 & \sum_{k=0}^a \binom{a}{k}\binom{b}{d+k}\binom{k}{c}\binom{d+k}{d}H_k H_{b-d-k} \\
={} & \binom{a+b-c-d}{a}\binom{a}{c}\binom{b}{d}[H_{b-d}^{(2)}-H_{a+b-c-d}^{(2)} \\
 & + (H_{b-d}-H_{a+b-c-d}+H_{b-d-c})(H_{b-d}-H_{a+b-c-d}+H_a)]
\end{split}
\end{equation}

\section{A Recursion Formula for a Combinatorial Sum}

Using a recursion formula, a rational function $P_m(n)$ is found such that:
\begin{equation}
 \sum_{k=0}^n \binom{n}{k}^2 k^m = \binom{2n}{n}P_m(n)
\end{equation}
From (\ref{sqrsum1}) or (\ref{sqrsum2}) with $d=0$ we have:
\begin{equation}
 \sum_{k=0}^n \binom{n}{k}^2 \binom{k}{m} = \binom{2n-m}{n}\binom{n}{m}
\end{equation}
This formula is rewritten as:
\begin{equation}
 \sum_{k=0}^n \binom{n}{k}^2 \prod_{j=0}^{m-1}(k-j) = \binom{2n}{n}\prod_{j=0}^{m-1}\frac{(n-j)^2}{2n-j}
\end{equation}
Now the following is used:
\begin{equation}
 \prod_{j=0}^{m-1}(k-j) = \sum_{j=0}^{m}(-1)^{m-j}\cycle{m}{j}k^j
\end{equation}
where $\cycle{a}{b}$ is the Stirling number of the first kind \cite{GKP94}.
Then it is clear that $P_m(n)$ has the following recursion formula:
\begin{equation}
 P_m(n) = \prod_{k=0}^{m-1}\frac{(n-k)^2}{2n-k} - \sum_{k=0}^{m-1}(-1)^{m-k}\cycle{m}{k}P_k(n)
\end{equation}
The following are a few examples:
\begin{equation}
 \sum_{k=0}^n \binom{n}{k}^2 = \binom{2n}{n}
\end{equation}
\begin{equation}
 \sum_{k=0}^n \binom{n}{k}^2 k = \binom{2n}{n}\frac{n}{2}
\end{equation}
\begin{equation}
 \sum_{k=0}^n \binom{n}{k}^2 k^2 = \binom{2n}{n} \frac{n^3}{2(2n-1)}
\end{equation}
\begin{equation}
 \sum_{k=0}^n \binom{n}{k}^2 k^3 = \binom{2n}{n} \frac{n^3(n+1)}{4(2n-1)}
\end{equation}
\begin{equation}
 \sum_{k=0}^n \binom{n}{k}^2 k^4 = \binom{2n}{n} \frac{n^3(n^3+n^2-3n-1)}{4(2n-1)(2n-3)}
\end{equation}
\begin{equation}
 \sum_{k=0}^n \binom{n}{k}^2 k^5 = \binom{2n}{n} \frac{n^4(n+1)(n^2+2n-5)}{8(2n-1)(2n-3)}
\end{equation}
\begin{equation}
 \sum_{k=0}^n \binom{n}{k}^2 k^6 = \binom{2n}{n} \frac{n^3(n^6+3n^5-13n^4-15n^3+30n^2+8n-2)}{8(2n-1)(2n-3)(2n-5)}
\end{equation}
\begin{equation}
 \sum_{k=0}^n \binom{n}{k}^2 k^7 = \binom{2n}{n} \frac{n^4(n+1)(n^5+5n^4-15n^3-35n^2+70n-14)}{16(2n-1)(2n-3)(2n-5)}
\end{equation}

The Mathematica$^{\textregistered}$ \cite{W03} program used to compute the expressions
is given below:
\begin{alltt}
P[0]=1;
P[m_]:=P[m]=Factor[Simplify[Product[(n-k)^2/(2n-k),\{k,0,m-1\}]
 -Sum[StirlingS1[m,k]P[k],\{k,0,m-1\}]]]
\end{alltt}

\pdfbookmark[0]{References}{}

\end{document}